\documentclass[11pt,reqno]{amsart}
\usepackage[T1]{fontenc}
\usepackage{lmodern}
\usepackage[english]{babel}
\usepackage{graphicx}
\usepackage{amsmath}
\usepackage{amssymb}
\usepackage{mathrsfs}
\usepackage{tikz}
\usetikzlibrary{matrix}
\usepackage[all]{xy}
\usepackage{tikz-cd}
\usepackage{bookmark}
\usepackage[numbers,sort&compress]{natbib}
\usepackage{xcolor}
\usepackage{slashed}
\usepackage{hyperref}
\usepackage{cancel}
\numberwithin{equation}{section}
\usepackage{fancyhdr}
\usepackage{amsthm}
\usepackage{braket}
\usepackage{feynmf}
\usepackage[a4paper]{geometry}
\newcommand\Item[1][]{%
  \ifx\relax#1\relax  \item \else \item[#1] \fi
  \abovedisplayskip=0pt\abovedisplayshortskip=0pt~\vspace*{-\baselineskip}}
\geometry{nomarginpar,verbose,tmargin=4.5cm,bmargin=4.5cm,lmargin=2.9cm,rmargin=2.7cm}
\begin{document}

\title[The Cohomology of the Virasoro algebra]{The Low-Dimensional Algebraic Cohomology of the Virasoro Algebra}
\author[Jill Ecker]{Jill Ecker}
\address{University of Luxembourg, Faculty of Science, Technology and Communication,
Campus Belval, Maison du Nombre, 6, avenue de la Fonte, L-4364 Esch-sur-Alzette, Luxembourg}
\curraddr{}
\email{jill.ecker@uni.lu}
\author[Martin Schlichenmaier]{Martin Schlichenmaier}
\address{University of Luxembourg, Faculty of Science, Technology and Communication,
Campus Belval, Maison du Nombre, 6, avenue de la Fonte, L-4364 Esch-sur-Alzette, Luxembourg}
\curraddr{}
\email{martin.schlichenmaier@uni.lu}
\thanks{Partial  support by the
Internal Research Project  GEOMQ15,  University of Luxembourg,
and
by the OPEN programme  of the Fonds National de la Recherche
(FNR), Luxembourg,  project QUANTMOD O13/570706
is gratefully acknowledged.}
\subjclass[2000]{Primary: 17B56; Secondary: 17B68, 17B65, 17B66, 14D15, 
81R10, 81T40}

\keywords{Witt algebra; Virasoro algebra; Lie algebra cohomology; Deformations of algebras; conformal field theory}
\date{16.05.2018}
\begin{abstract}
The main aim of this article is to prove the one-dimensionality of the third algebraic cohomology of the Virasoro algebra with values in the adjoint module. We announced this result in a previous publication with only a sketch of the proof. The detailed proof is provided in the present article. We also show that the third algebraic cohomology of the Witt and the Virasoro algebra with values in the trivial module is one-dimensional.  We consider purely algebraic cohomology, i.e. our results are independent of any topology chosen. \\
The vanishing of the third algebraic cohomology of the Witt algebra with values in the adjoint module has already been proven by Ecker and Schlichenmaier. 
\end{abstract}

\maketitle

\section{Introduction}
The Witt algebra and its universal central extension, the Virasoro algebra, are two of the most important infinite-dimensional Lie algebras, as they have many applications both in mathematics and theoretical physics. The Virasoro algebra is omnipresent in string theory, where it is attached to physical observables such as the mass spectrum. In two-dimensional conformal field theory, the Virasoro algebra is of outermost importance.\\
The cohomology of Lie algebras and the low-dimensional cohomology in particular, has numerous interpretations in terms of known objects such as invariants, outer derivations, extensions, deformations and obstructions, as well as crossed modules, see e.g. Gerstenhaber \cite{MR0161898,MR0171807,MR0207793,MR0240167,MR0160807}. The analysis of these objects leads to a better understanding of the Lie algebra itself. Moreover, deformations of Lie algebras can yield families of new Lie algebras. \\
In the so-called \textit{continuous} cohomology, many results about the classical infinite-dimensional Lie algebras such as the Witt algebra are known. In fact, a geometrical realization of the Witt algebra is given by the complexified Lie algebra of polynomial vector fields on the circle, which forms a dense subalgebra of the complexified Lie algebra of smooth vector fields on the circle, $Vect(S^1)$. In this setting, it is natural to consider continuous cohomology. The continuous cohomology of vector fields on the circle with values in the trivial module is known, see the results by Gelfand and Fuks \cite{MR874337,MR0245035}. Similarly, based on results of Goncharova \cite{MR0402765}, Reshetnikov \cite{MR0292097} and Tsujishita \cite{MR0458517}, the vanishing of the continuous cohomology of vector fields on the circle with values in the adjoint module was proved by Fialowski and Schlichenmaier in \cite{MR2030563}.   \\
Less is known about the so-called \textit{algebraic} cohomology, also known as discrete cohomology, which contains the continuous cohomology as a sub-complex. The primary definition of the Witt and the Virasoro algebra is based on the Lie structure and is thus purely algebraic. In this article, we consider the Witt and the Virasoro algebra as purely algebraic objects, and we do not work in specific geometrical realizations of these. Hence, our results are independent of any underlying topology chosen. Similarly, our cochains are purely algebraic cochains, meaning we do not restrict ourselves to continuous cochains. Indeed, there are limitations for the continuous cohomology of purely algebraic infinite-dimensional Lie algebras, see e.g. Wagemann \cite{wagemann:tel-00397780}. Moreover, algebraic cohomology works for any base field $\mathbb{K}$ with characteristic zero, and not only for the fields $\mathbb{C}$ or $\mathbb{R}$.
Therefore, knowledge of the algebraic cohomology is needed.\\
In the literature, results on the algebraic cohomology of the Witt and the Virasoro algebra are somewhat scarce. In fact, algebraic cohomology is in general much harder to compute than continuous cohomology. In \cite{MR3200354,MR3363999}, Schlichenmaier showed the vanishing of the second algebraic cohomology of the Witt and the Virasoro algebra with values in the adjoint module by using elementary algebraic methods; see also Fialowski \cite{MR2985241}. For the Witt algebra, Fialowski announced this result already in \cite{MR1054321}, but without proof. Van den Hijligenberg and Kotchetkov proved in \cite{MR1417181} the vanishing of the second algebraic cohomology with values in the adjoint module of the superalgebras $k(1),k^+(1)$ and of their central extensions.
 In the case of the Witt algebra, the vanishing of the third algebraic cohomology with values in the adjoint module was proved by Ecker and Schlichenmaier \cite{Ecker:2017sen}. In \cite{Ecker:2017sen}, also the vanishing of the first algebraic cohomology of the Witt and the Virasoro algebra was shown. \\
The main aim of this article is to prove the one-dimensionality of the third algebraic cohomology of the Virasoro algebra with values in the adjoint module. This result was already announced in our previous article \cite{Ecker:2017sen}, but only a sketch of the proof was provided. The detailed proof is given in the present article. We use both elementary algebraic manipulations and higher tools from algebraic cohomology, such as long exact sequences and spectral sequences. Furthermore, we obtain several intermediate results, including the one-dimensionality of the third algebraic cohomology of the Witt and the Virasoro algebra with values in the trivial module. \\
 This article is organized as follows: In Section \ref{Section2}, we recall the algebraic definitions of the Witt and the Virasoro algebra. \\
In Section \ref{Section3}, we introduce the cohomology of Lie algebras, as well as some tools used to compute this cohomology. This section also contains a brief summary of the results known of the algebraic cohomology of the Witt and the Virasoro algebra, including the results derived in this article.\\
The final Section \ref{Section5} constitutes the main part of the present article and contains the proof of the one-dimensionality of the third algebraic cohomology of the Virasoro algebra with values in the adjoint module.\\
In a first step, using long exact sequences and spectral sequences, we show $\mathrm{H}^3(\mathcal{V},\mathcal{V})\cong \mathrm{H}^3(\mathcal{V},\mathbb{K})$.
In the proof, an earlier result of the authors  \cite{Ecker:2017sen}, $\mathrm{H}^3(\mathcal{W},\mathcal{W})=\{0\}$, was used.  \par
In a second step, we show that $\mathrm{H}^3(\mathcal{V},\mathbb{K})$ and simultaneously $\mathrm{H}^3(\mathcal{W},\mathbb{K})$ are one-dimensional.
Inspired by results from continuous cohomology, we identify a non-trivial 3-cocycle which we call \textit{algebraic Godbillon-Vey cocycle}. By essentially elementary but nevertheless intricate algebraic methods we show that this cocycle is a generator of $\mathrm{H}^3(\mathcal{V},\mathbb{K})$ and  $\mathrm{H}^3(\mathcal{W},\mathbb{K})$. Rather complicated recursions are needed to obtain this result. Hence indeed we obtain that $dim(\mathrm{H}^3(\mathcal{V},\mathcal{V}))=1$ whereas $\mathrm{H}^3(\mathcal{W},\mathcal{W})=\{0\}$. The reader should compare this with the results obtained for the second cohomology, where we have  $\mathrm{H}^2(\mathcal{W},\mathcal{W})=\mathrm{H}^2(\mathcal{V},\mathcal{V})=\{0\}$, see \cite{MR3200354}, \cite{MR2985241}.

\section{The Witt and the Virasoro Algebra}\label{Section2}
The \textit{Witt algebra} $\mathcal{W}$ is an infinite-dimensional, $\mathbb{Z}$-graded Lie algebra first introduced by Cartan in 1909 \cite{MR1509105}.
As a vector space, the Witt algebra is generated over a base field $\mathbb{K}$ with characteristic zero by the basis elements $\{ e_n\ |\ n\in\mathbb{Z}\}$, which satisfy the following Lie algebra structure equation:
\begin{equation*}
[e_n,e_m]=(m-n)e_{n+m},\qquad n,m\in\mathbb{Z}\,.
\end{equation*}
The Witt algebra is a $\mathbb{Z}$-graded Lie algebra, the degree of an element $e_n$ being defined by $deg(e_n):=n$. More precisely, the Witt algebra is an \textit{internally} $\mathbb{Z}$-graded Lie algebra, as the grading is given by one of its own elements, namely $e_0$: $[e_0,e_m]=me_{m}=deg(e_m)e_m$. The Witt algebra $\mathcal{W}$ can thus be decomposed into an infinite sum of one-dimensional homogeneous subspaces $\mathcal{W}_n$, where each subspace $\mathcal{W}_n$ is generated over $\mathbb{K}$ by a single element $e_n$.\par 
It is a well-known fact that the Witt algebra, up to equivalence and rescaling, has a unique non-trivial central extension $\mathcal{V}$, which in fact is a universal central extension:
\begin{equation}
0\longrightarrow \mathbb{K}\stackrel{i}{\longrightarrow}\mathcal{V}\stackrel{\pi}{\longrightarrow}\mathcal{W}\longrightarrow 0\,,\label{Extension}
\end{equation}
where $\mathbb{K}$ is in the center of $\mathcal{V}$. This extension $\mathcal{V}$ is called the Virasoro algebra.\\
As a vector space, $\mathcal{V}$ is given as a direct sum $\mathcal{V}=\mathbb{K}\oplus\mathcal{W}$, with generators $\hat{e}_n:=(0,e_n)$ and the one-dimensional central element $t:=(1,0)$. The generators fulfill the following Lie structure equation:
\begin{equation}\label{VirasoroLieStructure}
\begin{aligned}
&[\hat{e}_n,\hat{e}_m]=(m-n)\hat{e}_{n+m}+\alpha(e_n,e_m)\cdot t\qquad n,m\in\mathbb{Z}\,, \\
&[\hat{e}_n,t]=[t,t]=0\,,
\end{aligned}
\end{equation}
where $\alpha\in Z^2(\mathcal{W},\mathbb{K})$ is the so-called Virasoro 2-cocycle, sometimes also called the Gelfand-Fuks cocycle, which can be represented by:
\begin{equation}
\alpha(e_n,e_m)=-\frac{1}{12}(n^3-n)\delta_{n+m,0}\,.\label{GelfandFuks}
\end{equation}
 The cubic term $n^3$ is the most important term, while the linear term $n$ is  a coboundary\footnote{The symbol $\delta_{i,j}$ is the Kronecker Delta, defined as being one if $i=j$ and zero otherwise.}. \\
By defining $deg(\hat{e}_n):=deg(e_n)=n$ and $deg(t):=0$, the Virasoro algebra becomes also an internally $\mathbb{Z}$-graded Lie algebra.

\section{The cohomology of Lie algebras}\label{Section3}
\subsection{The Chevalley-Eilenberg cohomology}
For the convenience of the reader, we will briefly recall the Chevalley-Eilenberg cohomology, i.e. the cohomology of Lie algebras.\\
Let $\mathcal{L}$ be a Lie algebra and $M$ an $\mathcal{L}$-module. We denote by $C^q(\mathcal{L},M)$ the space of $q$-multilinear alternating maps on $\mathcal{L}$ with values in $M$, 
\begin{equation*}
C^q(\mathcal{L},M):=\text{Hom}_\mathbb{K}(\wedge^q\mathcal{L},M)\,.
\end{equation*}
Elements of $C^q(\mathcal{L},M)$ are called $q$-cochains. By convention, we have $C^0(\mathcal{L},M):=M$. The coboundary operators $\delta_q$ are defined by:
\begin{align}
\begin{array}[h]{rl}
\forall q\in\mathbb{N},\qquad &\delta_q:C^q(\mathcal{L},M)\rightarrow C^{q+1}(\mathcal{L},M): \psi \mapsto \delta_q \psi \,,\\
&\\
 (\delta_q\psi)(x_1,\dots x_{q+1}):&=\sum_{1\leq i<j\leq q+1}(-1)^{i+j+1}\ \psi (\left[x_i,x_j\right],x_1,\dots , \hat{x}_i,\dots , \hat{x}_j,\dots ,x_{q+1})\\
&\\
& +\sum_{i=1}^{q+1}(-1)^i\ x_i\cdot \psi (x_1,\dots ,\hat{x}_i,\dots ,x_{q+1})\,,
\end{array}\label{Delta}
\end{align}
with $x_1,\dots , x_{q+1}\in\mathcal{L}$, $\hat{x}_i$ means that the entry $x_i$ is omitted and the dot $\cdot$ stands for the module structure. For $x\in\mathcal{L}$ and $y\in M$ we have $x\cdot y=[x,y]$ in case of the adjoint module $M=\mathcal{L}$, and $x\cdot y=0$ in case of the trivial module $M=\mathbb{K}$. The coboundary operators satisfy $\delta_{q+1}\circ \delta_q=0\ \forall\ q\in\mathbb{N}$, meaning we obtain a cochain complex $(C^*(\mathcal{L},M),\delta )$ called the \textit{Chevalley-Eilenberg complex}. The corresponding cohomology is the \textit{Chevalley-Eilenberg cohomology} defined by: 
\begin{equation*} 
\mathrm{H}^q(\mathcal{L},M):=Z^q(\mathcal{L},M)/B^q(\mathcal{L},M)\,,
\end{equation*}
where $Z^q(\mathcal{L},M):=\text{ ker }\delta_q$ is the vector space of $q$-cocycles and 
$B^q(\mathcal{L},M):=\text{ im }\delta_{q-1}$ is the vector space of $q$-coboundaries. For more details, we refer the reader to the original literature by Chevalley and Eilenberg \cite{MR0024908}.
\subsection{Degree of a homogeneous cochain}
 Let $\mathcal{L}$ be a $\mathbb{Z}$-graded Lie algebra $\mathcal{L}=\bigoplus_{n\in\mathbb{Z}}\mathcal{L}_n$ and $M$ a $\mathbb{Z}$-graded $\mathcal{L}$-module, i.e. $M=\bigoplus_{n\in\mathbb{Z}}M_n$. A $q$-cochain $\psi$ is \textit{homogeneous of degree d} if there exists a $d\in\mathbb{Z}$ such that for all $q$-tuple $x_1,\dots ,x_q$ of homogeneous elements $x_i\in\mathcal{L}_{deg(x_i)}$, we have:
\begin{equation*}
\psi (x_1,\dots ,x_q)\in M_n\text{  with  }n=\sum_{i=1}^q deg (x_i)+d\,.
\end{equation*}
This leads to the decomposition of the cohomology for all $q$:
\begin{equation*}
\mathrm{H^q}(\mathcal{L},M)=\bigoplus_{d\in\mathbb{Z}}\mathrm{H}^q_{(d)}(\mathcal{L},M)\,.
\end{equation*} 
An important result by Fuks \cite{MR874337} states that for internally graded Lie algebras and modules, the cohomology reduces to the degree-zero cohomology:
\begin{equation}\label{Fuks}
\begin{aligned}
&\mathrm{H}^q_{(d)}(\mathcal{L},M)=\{0\}\qquad\text{ for }d\neq 0\,, \\
&\mathrm{H}^q(\mathcal{L},M)=\mathrm{H}^q_{(0)}(\mathcal{L},M)\,.
\end{aligned}
\end{equation}

\subsection{Results on the algebraic cohomology of the Witt and the Virasoro algebra}\label{ResultsWittVirasoro}
For future reference, we briefly summarize in this section known results on the algebraic cohomology of the Witt and the Virasoro algebra, including the results derived in the present article. Concerning the interpretation of the low-dimensional cohomology, we refer the reader to \cite{Ecker:2017sen}.\\
For the zeroth cohomology corresponding to invariants, we immediately obtain by direct computation the following results for the Witt and the Virasoro algebra:
\begin{align*}
\mathrm{H}^0(\mathcal{W},\mathbb{K})=\mathbb{K}&\qquad\text{ and }\qquad\mathrm{H}^0(\mathcal{W},\mathcal{W})=\{0\}\,, \\
\mathrm{H}^0(\mathcal{V},\mathbb{K})=\mathbb{K}&\qquad\text{ and }\qquad\mathrm{H}^0(\mathcal{V},\mathcal{V})=\mathbb{K}\ t\,, 
\end{align*} 
where $t$ is the central element. \\
In \cite{Ecker:2017sen}, the first algebraic cohomology of the Witt and the Virasoro algebra was computed:
\begin{align*}
\mathrm{H}^1(\mathcal{W},\mathbb{K})=\{0\}&\qquad\text{ and }\qquad \mathrm{H}^1(\mathcal{W},\mathcal{W})=\{0\}\,,\\
\mathrm{H}^1(\mathcal{V},\mathbb{K})=\{0\} &\qquad\text{ and }\qquad\mathrm{H}^1(\mathcal{V},\mathcal{V})=\{0\}\,. 
\end{align*}
Concerning the second cohomology related to central extensions and deformations, we have the following results:
\begin{align*}
dim(\mathrm{H}^2(\mathcal{W},\mathbb{K}))=1&\qquad\text{ and }\qquad \mathrm{H}^2(\mathcal{W},\mathcal{W})=\{0\}\,,\\
\mathrm{H}^2(\mathcal{V},\mathbb{K})=\{0\} &\qquad\text{ and }\qquad\mathrm{H}^2(\mathcal{V},\mathcal{V})=\{0\}\,.
\end{align*}
The first result $dim(\mathrm{H}^2(\mathcal{W},\mathbb{K}))=1$ is a well-known result. It states that the Witt algebra admits, up to equivalence and rescaling, only one non-trivial central extension, namely the Virasoro algebra. 
For an algebraic proof of this result, see e.g. \cite{MR3185361}. The second result $\mathrm{H}^2(\mathcal{W},\mathcal{W})=\{0\}$ was announced by Fialowski \cite{MR1054321} without proof and was shown algebraically by Schlichenmaier \cite{MR3200354,MR3363999} and Fialowski \cite{MR2985241}. This result implies that the Witt algebra is infinitesimally and formally rigid. 
The third result $\mathrm{H}^2(\mathcal{V},\mathbb{K})=\{0\} $ and the fourth result $\mathrm{H}^2(\mathcal{V},\mathcal{V})=\{0\}$ were shown by Schlichenmaier \cite{MR3200354}. \\
Concerning the third cohomology related to crossed modules, we have the following results:
\begin{align*}
dim (\mathrm{H}^3(\mathcal{W},\mathbb{K}))=1&\qquad\text{ and }\qquad \mathrm{H}^3(\mathcal{W},\mathcal{W})=\{0\}\,,\\
dim(\mathrm{H}^3(\mathcal{V},\mathbb{K}))=1& \qquad\text{ and }\qquad dim(\mathrm{H}^3(\mathcal{V},\mathcal{V}))=1\,. 
\end{align*}
The second result $\mathrm{H}^3(\mathcal{W},\mathcal{W})=\{0\}$ was algebraically proved by Ecker and Schlichenmaier \cite{Ecker:2017sen}. This result states that there are no crossed modules associated to the Lie algebra $\mathcal{W}$ and the module $\mathcal{W}$.
The remaining three results were announced in \cite{Ecker:2017sen}, and a sketch of the proof was provided. The details of the proof are given in the present article, see Theorems \ref{H3VKMain} and \ref{h3vv}. In these cases there exists an equivalence class of a crossed module associated to $\mathcal{W}$ and $\mathbb{K}$, $\mathcal{V}$ and $\mathbb{K}$, as well as $\mathcal{V}$ and $\mathcal{V}$, respectively. Note that for $\mathrm{H}^3(\mathcal{W},\mathbb{K})$ and $\mathrm{H}^3(\mathcal{V},\mathbb{K})$, we will provide below an explicit algebraic expression for the cocycle generating these spaces.

\subsection{The Hochschild-Serre Spectral Sequence}\label{Section4}

In the present article, we will use the Hochschild-Serre spectral sequence. The following theorem is a well-known result in algebraic cohomology:
  \newtheorem{HS}{Theorem [Hochschild-Serre]}[subsection]
\begin{HS} \label{hs}
 For every ideal $\mathfrak{h}$ of a Lie algebra $\mathfrak{g}$, there is a convergent first quadrant spectral sequence:
\begin{equation*}
E_2^{pq}=\mathrm{H}^p(\mathfrak{g}/\mathfrak{h},\mathrm{H}^q(\mathfrak{h},M)))\Rightarrow \mathrm{H}^{p+q}(\mathfrak{g},M)\,,
\end{equation*}
with $M$ being a $\mathfrak{g}$-module and via $\mathfrak{h}\hookrightarrow \mathfrak{g}$ also a $\mathfrak{h}$-module.
\end{HS}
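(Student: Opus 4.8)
The plan is to realise the asserted spectral sequence as the one attached to a natural filtration of the Chevalley--Eilenberg cochain complex of $\mathfrak{g}$. Concretely, I would equip $C^n(\mathfrak{g},M)$ with the decreasing filtration in which $\psi\in F^pC^n(\mathfrak{g},M)$ precisely when $\psi(x_1,\dots,x_n)=0$ as soon as at least $n-p+1$ of the arguments lie in $\mathfrak{h}$; informally, $F^p$ consists of the cochains that involve at most $p$ directions transverse to $\mathfrak{h}$. In each total degree this filtration is finite, $C^n=F^0C^n\supseteq F^1C^n\supseteq\dots\supseteq F^{n+1}C^n=\{0\}$, hence bounded.

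First I would check that $\delta$ respects the filtration, i.e. $\delta_n(F^pC^n)\subseteq F^pC^{n+1}$. This is exactly where the hypothesis that $\mathfrak{h}$ is an \emph{ideal} enters: in the first sum of \eqref{Delta} a bracket $[x_i,x_j]$ with $x_i\in\mathfrak{h}$ lands again in $\mathfrak{h}$, so the number of $\mathfrak{h}$-valued arguments does not drop; the module sum is handled similarly using that $M$ is a $\mathfrak{g}$-module. Thus $(C^\bullet(\mathfrak{g},M),\delta,F^\bullet)$ is a filtered complex, and the standard machinery of filtered complexes produces a spectral sequence $E_r^{pq}$ which, the filtration being bounded, converges to the associated graded of $\mathrm{H}^{p+q}(\mathfrak{g},M)$. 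It lives in the first quadrant because $F^pC^n=C^n$ for $p\ge n$ and the cochain groups vanish in negative degrees.

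The heart of the argument is the computation of the first two pages. Choosing a vector-space splitting $\mathfrak{g}\cong(\mathfrak{g}/\mathfrak{h})\oplus\mathfrak{h}$ to separate transverse and tangential arguments, I would identify the graded piece $\mathrm{gr}^pC^{p+q}(\mathfrak{g},M)$ with $C^p(\mathfrak{g}/\mathfrak{h},C^q(\mathfrak{h},M))$. Under this identification the $d_0$-differential is, up to sign, the Chevalley--Eilenberg differential of $\mathfrak{h}$ acting on the tangential slot, so $E_1^{pq}=C^p(\mathfrak{g}/\mathfrak{h},\mathrm{H}^q(\mathfrak{h},M))$; the induced differential $d_1$ then matches the Chevalley--Eilenberg differential \eqref{Delta} of the quotient $\mathfrak{g}/\mathfrak{h}$ with coefficients in $\mathrm{H}^q(\mathfrak{h},M)$, giving $E_2^{pq}=\mathrm{H}^p(\mathfrak{g}/\mathfrak{h},\mathrm{H}^q(\mathfrak{h},M))$ as claimed.

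The hard part will be this last identification. One must show that the natural $\mathfrak{g}$-action on the complex $C^\bullet(\mathfrak{h},M)$ descends on cohomology to an action of $\mathfrak{g}$ on $\mathrm{H}^q(\mathfrak{h},M)$ on which $\mathfrak{h}$ acts trivially --- a Cartan-type homotopy argument, exhibiting the Lie derivative along $x\in\mathfrak{h}$ as chain-homotopic to zero via the contraction $\iota_x$ --- so that $\mathrm{H}^q(\mathfrak{h},M)$ is genuinely a $\mathfrak{g}/\mathfrak{h}$-module, and then one must verify that the differential coming from the filtration agrees on the nose, signs included, with \eqref{Delta} for $\mathfrak{g}/\mathfrak{h}$. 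An alternative, more conceptual route that bypasses much of this bookkeeping is the Grothendieck spectral sequence for the composite of the functors $M\mapsto M^{\mathfrak{h}}$ and $N\mapsto N^{\mathfrak{g}/\mathfrak{h}}$, whose composite is $M\mapsto M^{\mathfrak{g}}$, once one checks that $(-)^{\mathfrak{h}}$ carries injective $\mathfrak{g}$-modules to $(\mathfrak{g}/\mathfrak{h})$-acyclics; but since the statement is classical, in the paper we simply cite Hochschild--Serre.
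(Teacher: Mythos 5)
The paper does not prove this statement at all: it is quoted as a classical theorem and attributed to Weibel's textbook and the original Hochschild--Serre articles, so there is no in-paper argument to compare against. Your sketch is the standard proof from those references --- the decreasing filtration of the Chevalley--Eilenberg complex by the number of arguments allowed to lie in $\mathfrak{h}$, preservation of the filtration by $\delta$ using that $\mathfrak{h}$ is an ideal, boundedness giving convergence, and the identification of the $E_0$-, $E_1$- and $E_2$-pages --- and it is essentially correct, including your identification of the genuinely delicate point, namely that the $\mathfrak{g}$-action on $C^\bullet(\mathfrak{h},M)$ induces a $\mathfrak{g}/\mathfrak{h}$-module structure on $\mathrm{H}^q(\mathfrak{h},M)$ via the Cartan homotopy $L_x=\delta\iota_x+\iota_x\delta$ for $x\in\mathfrak{h}$. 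Two small remarks. First, the isomorphism $\mathrm{gr}^pC^{p+q}(\mathfrak{g},M)\cong C^p(\mathfrak{g}/\mathfrak{h},C^q(\mathfrak{h},M))$ does not actually require choosing a splitting: a class in $F^p/F^{p+1}$ evaluated on lifts $y_1,\dots,y_p$ of elements of $\mathfrak{g}/\mathfrak{h}$ and on $x_1,\dots,x_q\in\mathfrak{h}$ is independent of the lifts precisely because changing a lift by an element of $\mathfrak{h}$ places $q+1$ arguments in $\mathfrak{h}$, which $F^p$ kills; this is cleaner than the splitting argument and avoids checking independence of the choice later. Second, your justification of the first-quadrant property contains a slip: one has $F^pC^n=C^n$ for $p\le 0$ and $F^pC^n=\{0\}$ for $p>n$ (consistent with the chain of inclusions you wrote earlier), not $F^pC^n=C^n$ for $p\ge n$; these two facts give the vanishing of $E_0^{p,q}$ for $p<0$ and $q<0$ respectively. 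Your alternative route via the Grothendieck spectral sequence for the composite of invariant functors is also standard and valid, subject to the acyclicity check you name.
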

A proof of this well-known result can for example be found in the textbook of Weibel \cite{MR1269324}. The original literature is given by the articles \cite{MR0052438,MR0054581} by Hochschild and Serre.\\
For knowledge of general spectral sequences, the reader may consult the textbook by McCleary \cite{MR1793722}.

\section{Analysis of \texorpdfstring{$\mathrm{H}^3(\mathcal{V},\mathcal{V})$}{H3(V,V)}}\label{Section5}
The aim of this article is to prove the following result: 
\newtheorem{H3VV}{Theorem}[section]
\begin{H3VV} \label{h3vv}
The third cohomology group of the Virasoro algebra $\mathcal{V}$ over a field $\mathbb{K}$ with $char(\mathbb{K})=0$ and values in the adjoint module is one-dimensional, i.e.
\begin{equation*}
dim(\mathrm{H}^3(\mathcal{V},\mathcal{V}))=1\,.
\end{equation*}
\end{H3VV}
\begin{proof}
Theorem \ref{h3vv} can be shown by direct computations using elementary algebra. However, the resulting proof is rather long and complicated. We prefer to prove this result by using higher tools from cohomological algebra, which has also the advantage to yield interesting intermediate results. Moreover, it exhibits the relations between the various cohomology groups in an efficient way.\\ 
The proof is divided into two main steps:
\begin{itemize}
	\item Theorem \ref{H3VW}: Proof of $\mathrm{H}^3(\mathcal{V},\mathcal{W})\cong \mathrm{H}^3(\mathcal{W},\mathcal{W})$.
	\item Theorem \ref{H3VKMain}: Proof of $dim(\mathrm{H}^3(\mathcal{V},\mathbb{K}))=dim(\mathrm{H}^3(\mathcal{W},\mathbb{K}))= 1$.
\end{itemize}
Theorem \ref{H3VW} will be derived via spectral sequences.  Theorem \ref{H3VKMain} will be shown by using elementary algebraic manipulations. 
 The result for the Virasoro algebra listed in the second  bullet point is necessary for the proof of $dim(\mathrm{H}^3(\mathcal{V},\mathcal{V}))=1$. \\

The short exact sequence of Lie algebras in (\ref{Extension}) can also be viewed as a short exact sequence of $\mathcal{V}$-modules. In fact, the Witt algebra $\mathcal{W}$ is a $\mathcal{V}$-module.
Furthermore, the base field $\mathbb{K}$ is a trivial $\mathcal{V}$-module.
This short exact sequence of $\mathcal{V}$-modules gives rise to a long exact sequence in cohomology, the relevant part for us being the third cohomology:\\
\begin{equation}
\dots\rightarrow  \mathrm{H}^2(\mathcal{V},\mathcal{W})\rightarrow \mathrm{H}^3(\mathcal{V},\mathbb{K})\rightarrow \mathrm{H}^3(\mathcal{V},\mathcal{V})\rightarrow  \mathrm{H}^3(\mathcal{V},\mathcal{W})\rightarrow \dots\,.\label{LongSequence}
\end{equation}
 Concerning the third cohomology group, we have $\mathrm{H}^3(\mathcal{W},\mathcal{W})=\{0\}$ \cite{Ecker:2017sen}, see Section \ref{ResultsWittVirasoro}. This result, together with Theorem \ref{H3VW}, leads to $\mathrm{H}^3(\mathcal{V},\mathcal{W})=\{0\}$. Moreover, in \cite{MR3200354}, it was shown that $\mathrm{H}^2(\mathcal{V},\mathcal{W})\cong \mathrm{H}^2(\mathcal{W},\mathcal{W})$ and also $ \mathrm{H}^2(\mathcal{W},\mathcal{W})=\{0\}$; both results together leading to $\mathrm{H}^2(\mathcal{V},\mathcal{W})=0$.
Consequently, the long exact sequence (\ref{LongSequence}) above reduces for the third cohomology to:
\begin{equation}
0\rightarrow \mathrm{H}^3(\mathcal{V},\mathbb{K})\rightarrow \mathrm{H}^3(\mathcal{V},\mathcal{V})\rightarrow  0\,.\label{TwoTermSequence}
\end{equation}
As $dim(\mathrm{H}^3(\mathcal{V},\mathbb{K}))= 1$ by Theorem \ref{H3VKMain}, the two-term exact sequence (\ref {TwoTermSequence}) above yields the desired result $dim(\mathrm{H}^3(\mathcal{V},\mathcal{V}))=1$. 
\end{proof}

\subsection{Proof of \texorpdfstring{$\mathrm{H}^3(\mathcal{V},\mathcal{W})\cong \mathrm{H}^3(\mathcal{W},\mathcal{W})$}{H3(V,W)=H3(W,W)}}\label{SectionMain1}
We will prove this via the Hochschild-Serre spectral sequence\footnote{There is another proof available by elementary but very tedious calculations. The proof using spectral sequences is much shorter.}. \\  
Take $\mathfrak{g}=\mathcal{V}$ and $\mathfrak{h}=\mathbb{K}$ in Theorem \ref{hs}, then $\mathfrak{g}/\mathfrak{h}=\mathcal{W}$. Hence, the second stage spectral sequence $E_2^{pq}$ becomes in our case:
\begin{equation*}
E_2^{p,q}=\mathrm{H}^p(\mathcal{W},\mathrm{H}^q(\mathbb{K},M))\Rightarrow \mathrm{H}^{p+q}(\mathcal{V},M)\,.
\end{equation*}
Due to the alternating property of the cochains, we have $\mathrm{H}^k(\mathbb{K},M)=0$ for $k>1$. This is true for any module $M$. \\
Consequently, the Hochschild-Serre spectral sequence has only two lines in our case:
\begin{equation}\label{HochSerre}
\def\arraystretch{1.6}
\begin{array}[h]{cccc}
 0 & 0& 0 & \dots\\
 \mathrm{H}^0(\mathcal{W},\mathrm{H}^1(\mathbb{K},M)) & \mathrm{H}^1(\mathcal{W},\mathrm{H}^1(\mathbb{K},M)) & \mathrm{H}^2(\mathcal{W},\mathrm{H}^1(\mathbb{K},M)) & \dots\\
  \framebox{$\mathrm{H}^0(\mathcal{W},\mathrm{H}^0(\mathbb{K},M))$} & \mathrm{H}^1(\mathcal{W},\mathrm{H}^0(\mathbb{K},M)) & \mathrm{H}^2(\mathcal{W},\mathrm{H}^0(\mathbb{K},M)) & \dots
\end{array}
\end{equation}
For reference, we put the $(0,0)$-entry into a box. The entries with $p<0$ or $q<0$ are zero.
In addition, the entries for $q\geq 2$ are zero because of $\mathrm{H}^k(\mathbb{K},M)=0$ for $k>1$.\\
The second stage spectral sequence comes with differentials:
\begin{equation*}
d_2^{p,q}: E_2^{p,q}\longrightarrow E_2^{p+2,q-1}\,.
\end{equation*}
 With these maps, we can take the cohomology $E_3^{p,q}$ of $E_2^{p,q}$, which gives the third page spectral sequence:
\begin{equation*}
E_3^{p,q}=\frac{\text{ker } d_2^{p,q}}{\text{im } d_2^{p-2,q+1}}\,.
\end{equation*}
The third page spectral sequence has the same shape as the second page spectral sequence, meaning it too has only two lines different from zero.
Again, the third page spectral sequence comes with maps:
\begin{equation*}
d_3^{p,q}: E_3^{p,q}\longrightarrow E_3^{p+3,q-2}\,.
\end{equation*}
However, the operator $d_3^{p,q}$ corresponds to going three entries to the right and two entries to the bottom. Since we only have two lines different from zero , we always obtain $\text{im } d_3^{p,q}=0$. Moreover, the kernel of $d_3^{p,q}$ then corresponds to $E_3^{p,q}$. Therefore, we obtain $E_3^{p,q}=E_4^{p,q}=\dots =E_\infty^{p,q}$.
We finally gathered all the necessary ingredients to prove the following result:
\newtheorem{H3VW}{Theorem}[subsection]
\begin{H3VW} \label{H3VW}
The following holds:
\begin{align*}
\text{If  }\qquad&\mathrm{H}^j(\mathcal{W},\mathcal{W})=0\quad\text{for }\quad k-2\leq j\leq k-1\,,\\
\text{ then }\qquad&\mathrm{H}^k(\mathcal{V},\mathcal{W})\cong\mathrm{H}^k(\mathcal{W},\mathcal{W})\,.\\
\text{In particular,}\qquad 
&\mathrm{H}^3(\mathcal{V},\mathcal{W})\cong \mathrm{H}^3(\mathcal{W},\mathcal{W})\,.
\end{align*}
\end{H3VW}
\begin{proof}
If $M$ is a $\mathfrak{g}$-module, then $M$ is also a $\mathfrak{h}$-module since $\mathfrak{h}\unlhd\mathfrak{g}$ in Theorem \ref{hs}. Then $\mathfrak{g}/\mathfrak{h}$ acts on $\mathrm{H}^q(\mathfrak{h},M)$.
We consider the module $M=\mathcal{W}$ in (\ref{HochSerre}), which is a $\mathcal{V}$-module and a trivial $\mathbb{K}$-module. The latter implies $\mathrm{H}^0(\mathbb{K},\mathcal{W})=^{\mathbb{K}}\mathcal{W}=\mathcal{W}$, where $^{\mathbb{K}}\mathcal{W}$ denotes the space of $\mathbb{K}$-invariants of $\mathcal{W}$.

 Moreover, we have $\mathrm{H}^1(\mathbb{K},\mathcal{W})=\mathcal{W}$ as every linear map $\phi\in C^1(\mathbb{K},\mathcal{W})$ is a cocycle and hence $\mathrm{H}^1(\mathbb{K},\mathcal{W})$ corresponds one-to-one to all linear maps $1\mapsto \omega\in\mathcal{W}$, i.e. to all elements of $\mathcal{W}$. The $(0,0)$ and $(0,1)$ entries become: $\mathrm{H}^0(\mathcal{W},\mathcal{W})=^{\mathcal{W}}\mathcal{W}=0$.

Our second stage spectral sequence thus becomes:
\begin{displaymath}
\xymatrix{
  0  & 0& 0 &0& 0 & \dots\\
 0 \ar[drr]^{d_2^{0,1}} & \mathrm{H}^1(\mathcal{W},\mathcal{W})\ar[drr]^{d_2^{1,1}} & \mathrm{H}^2(\mathcal{W},\mathcal{W})\ar[drr]^{d_2^{2,1}}& \mathrm{H}^3(\mathcal{W},\mathcal{W}) & \mathrm{H}^4(\mathcal{W},\mathcal{W})& \dots\\
  \framebox{$0$} & \mathrm{H}^1(\mathcal{W},\mathcal{W}) & \mathrm{H}^2(\mathcal{W},\mathcal{W})& \mathrm{H}^3(\mathcal{W},\mathcal{W}) & \mathrm{H}^4(\mathcal{W},\mathcal{W}) & \dots\\
}
\end{displaymath}
Once again, we put the $(0,0)$-entry into a box to increase readability.
In order to simplify the notation, we define: 
\begin{equation}
\varphi_p:=d_2^{p,1}: E_2^{p,1}\longrightarrow E_2^{p+2,0}\,.\label{DiffMap}
\end{equation}
Next, we take the cohomology of the sequence with respect to $\varphi_p$, which gives us the third page spectral sequence $E_3^{p,q}=E^{p,q}_\infty$. We will abbreviate $\mathrm{H}^i(\mathcal{W},\mathcal{W})$ by $\mathrm{H}^i$:
\vspace{-0.5cm}
\begin{center}
\resizebox{\textwidth}{!}{\xymatrix{
  0 \ar@{-}[dr] & 0\ar@{-}[dr]& 0\ar@{-}[dr] &0\ar@{-}[dr]& 0 & \dots\\
 0 \ar@{-}[dr] & \text{ker }(\varphi_1:\mathrm{H}^1\rightarrow \mathrm{H}^3)\ar@{-}[dr]& \text{ker }(\varphi_2:\mathrm{H}^2\rightarrow \mathrm{H}^4)\ar@{-}[dr]& \text{ker }(\varphi_3:\mathrm{H}^3\rightarrow \mathrm{H}^5)\ar@{-}[dr] &\text{ker }(\varphi_4:\mathrm{H}^4\rightarrow \mathrm{H}^6)& \dots\\
  \framebox{$0$} & \mathrm{H}^1 \ar@{-->}[d] & \mathrm{H}^2  \ar@{-->}[d]& \frac{\mathrm{H}^3}{\text{im }\varphi_1} \ar@{-->}[d] & \frac{\mathrm{H}^4}{\text{im }\varphi_2} \ar@{-->}[d]& \dots\\
   & \mathrm{H}^1(\mathcal{V},\mathcal{W})&\mathrm{H}^2(\mathcal{V},\mathcal{W}) &\mathrm{H}^3(\mathcal{V},\mathcal{W})&\mathrm{H}^4(\mathcal{V},\mathcal{W}) & \dots 
}
}\end{center} \null 

The elements $E^{p,q}_2$ converge to $\mathrm{H}^{p+q}(\mathfrak{g},M)$, i.e. $\mathrm{H}^{p+q}(\mathcal{V},\mathcal{W})$ in our case. This means that in the case under consideration, we can write the elements $\mathrm{H}^{p+q}(\mathcal{V},\mathcal{W})$ of degree $n=p+q$ as a direct sum of the elements $E^{p,q}_3$ of degree $n$, i.e. the elements $E^{p,q}_3$ with $p+q=n$ lying on the $n$-th diagonal: 
\begin{equation*}
\mathrm{H}^{k}(\mathcal{V},\mathcal{W})\cong E_3^{k,0}\oplus E_3^{k-1,1}\,.
\end{equation*}
As illustrated in the diagram above, we thus explicitly obtain the following results:
\begin{itemize}
	\item $\mathrm{H}^1(\mathcal{V},\mathcal{W})\cong \mathrm{H}^1(\mathcal{W},\mathcal{W})$.
	\item[]
	\item $\mathrm{H}^2(\mathcal{V},\mathcal{W})\cong\mathrm{H}^2(\mathcal{W},\mathcal{W})\oplus\text{ker }(\varphi_1:\mathrm{H}^1(\mathcal{W},\mathcal{W})\rightarrow \mathrm{H}^3(\mathcal{W},\mathcal{W}))$.
	\item[]
	\item $\mathrm{H}^3(\mathcal{V},\mathcal{W})\cong\frac{\mathrm{H}^3(\mathcal{W},\mathcal{W})}{\text{im }\varphi_1}\oplus\text{ker }(\varphi_2:\mathrm{H}^2(\mathcal{W},\mathcal{W})\rightarrow \mathrm{H}^4(\mathcal{W},\mathcal{W}))$.
	\item[]
	\item $\mathrm{H}^4(\mathcal{V},\mathcal{W})\cong\frac{\mathrm{H}^4(\mathcal{W},\mathcal{W})}{\text{im }\varphi_2}\oplus\text{ker }(\varphi_3:\mathrm{H}^3(\mathcal{W},\mathcal{W})\rightarrow \mathrm{H}^5(\mathcal{W},\mathcal{W}))$.
	\item[]
	\item \dots
\end{itemize}
The first result has already been computed explicitly in \cite{Ecker:2017sen}. Regarding the second result, using the fact from Section \ref{ResultsWittVirasoro} that $\mathrm{H}^1(\mathcal{W},\mathcal{W})=\{0\}$, we obtain $\text{ker }(\varphi_1:\mathrm{H}^1(\mathcal{W},\mathcal{W})\rightarrow \mathrm{H}^3(\mathcal{W},\mathcal{W}))=0$ and thus $\mathrm{H}^2(\mathcal{V},\mathcal{W})=\mathrm{H}^2(\mathcal{W},\mathcal{W})$. This has also been proven by explicit computations before in \cite{MR3200354}.
Concerning the third result, we have $\text{ker }(\varphi_2:\mathrm{H}^2(\mathcal{W},\mathcal{W})\rightarrow \mathrm{H}^4(\mathcal{W},\mathcal{W}))=0$ because $\mathrm{H}^2(\mathcal{W},\mathcal{W})=\{0\}$, see Section \ref{ResultsWittVirasoro}. Moreover, we have $\text{im }(\varphi_1:\mathrm{H}^1(\mathcal{W},\mathcal{W})\rightarrow \mathrm{H}^3(\mathcal{W},\mathcal{W}))=0$ because $\mathrm{H}^1(\mathcal{W},\mathcal{W})=\{0\}$. Hence, we obtain the following result:
\begin{equation*}
\mathrm{H}^3(\mathcal{V},\mathcal{W})\cong\mathrm{H}^3(\mathcal{W},\mathcal{W})\,.
\end{equation*} 
This can be generalized. We see from the diagram above that for general $k\geq 3$, we have:
\begin{equation*}
\mathrm{H}^k(\mathcal{V},\mathcal{W})\cong\frac{\mathrm{H}^k(\mathcal{W},\mathcal{W})}{\text{im }\varphi_{k-2}}\oplus\text{ker }(\varphi_{k-1}:\mathrm{H}^{k-1}(\mathcal{W},\mathcal{W})\rightarrow \mathrm{H}^{k+1}(\mathcal{W},\mathcal{W})).
\end{equation*}
The announced general result is thus given by:
\begin{equation*}
\text{If }\quad \mathrm{H}^j(\mathcal{W},\mathcal{W})=0\quad\text{for}\quad k-2\leq j\leq k-1\,,
\text{ then}\quad  \mathrm{H}^k(\mathcal{V},\mathcal{W})\cong\mathrm{H}^k(\mathcal{W},\mathcal{W})\,.
\end{equation*}
\end{proof}
\subsection{Analysis of \texorpdfstring{$\mathrm{H}^3(\mathcal{W},\mathbb{K})$}{H3(W,K)} and \texorpdfstring{$\mathrm{H}^3(\mathcal{V},\mathbb{K})$}{H3(V,K)}}\label{SectionMain2}
In this section we will prove:
\newtheorem{H3VKMain}{Theorem}[subsection]
\begin{H3VKMain} \label{H3VKMain}
The third cohomology group of the Witt and the Virasoro algebra with values in the trivial module $\mathbb{K}$ is one-dimensional, i.e.:
\begin{equation*}
dim(\mathrm{H}^3(\mathcal{W},\mathbb{K}))=dim(\mathrm{H}^3(\mathcal{V},\mathbb{K}))= 1\,.
\end{equation*}
\end{H3VKMain}
For the convenience of the reader, we write down the condition for a 3-cochain $\psi$ to be a cocycle with values in the trivial module:
\begin{equation}\label{CocGen}
\begin{aligned}
&(\delta_3\psi)(x_1,x_2,x_3,x_4)=\psi\left(\left[x_1,x_2\right],x_3,x_4\right)-\psi\left(\left[x_1,x_3\right],x_2,x_4\right)+\psi\left(\left[x_1,x_4\right],x_2,x_3\right)\\
&+\psi\left(\left[x_2,x_3\right],x_1,x_4\right)-\psi\left(\left[x_2,x_4\right],x_1,x_3\right)
+\psi\left(\left[x_3,x_4\right],x_1,x_2\right)=0\,,
\end{aligned}
\end{equation}
where $x_1,x_2,x_3,x_4$ are elements of $\mathcal{W}$ or $\mathcal{V}$.\\
The condition for a 3-cocycle $\psi$ to be a coboundary with values in the trivial module is given by:
\begin{equation}
\psi(x_1,x_2,x_3)=(\delta_2\phi)(x_1,x_2,x_3)=\phi\left(\left[x_1,x_2\right],x_3\right)+\phi\left(\left[x_2,x_3\right],x_1\right)+\phi\left(\left[x_3,x_1\right],x_2\right)\,,\label{CobGen}
 \end{equation}
where $\phi$ is a 2-cochain with values in $\mathbb{K}$ and $x_1,x_2,x_3$ are elements of  $\mathcal{W}$ or $\mathcal{V}$.\\
Next, consider the following trilinear map:
\begin{equation*}
\Psi :\mathcal{W}\times\mathcal{W}\times\mathcal{W}\rightarrow\mathbb{K}\,,
\end{equation*}
defined on the basis elements via:
\begin{equation}
\Psi (e_i,e_j,e_k)=(i-j)(j-k)(i-k)\delta_{i+j+k,0}\,,\label{GVCoc}
\end{equation}
which we extend trivially to:
\begin{equation*}
\hat{\Psi}:\mathcal{V}\times\mathcal{V}\times\mathcal{V}\rightarrow\mathbb{K}\,,
\end{equation*}
by setting $\hat{\Psi}(x_1,x_2,x_3)=0$ whenever one of the elements $x_1$, $x_2$ or $x_3$ is a multiple of the central element $t$.
\newtheorem{GV}{Proposition}[subsection]
\begin{GV} \label{gv}
The trilinear maps $\Psi$ and $\hat{\Psi}$ define non-trivial cocycle classes of $\mathrm{H}^3(\mathcal{W},\mathbb{K})$ and  $\mathrm{H}^3(\mathcal{V},\mathbb{K})$, respectively.
\end{GV}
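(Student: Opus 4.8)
The plan is to prove the proposition in two independent steps: that $\Psi$ and $\hat{\Psi}$ are $3$-cocycles, and that neither is a coboundary. For the cocycle property of $\Psi$ I would substitute a quadruple of basis vectors $e_i,e_j,e_k,e_l$ into the closedness condition \eqref{CocGen} and use $[e_a,e_b]=(b-a)e_{a+b}$; each of the six resulting summands is then of the shape $\pm(\text{a degree-one polynomial in the indices})\cdot\Psi(e_{a+b},e_c,e_d)$, and since $\Psi(e_p,e_q,e_r)$ carries the factor $\delta_{p+q+r,0}$ while all the index-sums $p+q+r$ occurring are equal to $i+j+k+l$, one gets $(\delta_3\Psi)(e_i,e_j,e_k,e_l)=\delta_{i+j+k+l,0}\,Q(i,j,k,l)$ for a genuine polynomial $Q$ of total degree at most four. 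It then remains to check that $Q(i,j,k,-i-j-k)$ vanishes identically in $\mathbb{K}[i,j,k]$. This is a finite expansion, but I would shorten it by observing that $\delta_3\Psi$ is an alternating cochain, so $Q$ restricted to the hyperplane $i+j+k+l=0$ inherits the alternating symmetry; in particular it is antisymmetric in $i,j,k$, hence divisible by the Vandermonde factor $(i-j)(j-k)(i-k)$, and the remaining affine factor is pinned to zero by the leftover symmetry (under, say, swapping $i$ and $l$) together with a single evaluation, e.g.\ at $(i,j,k,l)=(2,1,0,-3)$, where the six terms sum to $-54+40-10+20+64-60=0$. For $\hat{\Psi}$ no new work is needed: $\hat{\Psi}=\pi^{*}\Psi$, the pull-back of $\Psi$ along the Lie algebra epimorphism $\pi\colon\mathcal{V}\to\mathcal{W}$ of \eqref{Extension} — indeed $\pi(\hat{e}_n)=e_n$, $\pi(t)=0$, and $\Psi$ is multilinear, so the two maps agree on all basis triples — and $\pi^{*}$ commutes with the Chevalley-Eilenberg differential, whence $\delta_3\hat{\Psi}=\pi^{*}(\delta_3\Psi)=0$.

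For non-triviality I would evaluate the two cocycles on the $\mathfrak{sl}_2$-triple $\{e_{-1},e_0,e_1\}$ in $\mathcal{W}$ (resp.\ $\{\hat{e}_{-1},\hat{e}_0,\hat{e}_1\}$ in $\mathcal{V}$). On that triple $\Psi(e_1,e_0,e_{-1})=(1-0)(0+1)(1+1)=2\neq 0$, and $\hat{\Psi}(\hat{e}_1,\hat{e}_0,\hat{e}_{-1})=2$ likewise. On the other hand, plugging $[e_1,e_0]=-e_1$, $[e_0,e_{-1}]=-e_{-1}$, $[e_{-1},e_1]=2e_0$ into \eqref{CobGen} gives $(\delta_2\phi)(e_1,e_0,e_{-1})=-\phi(e_1,e_{-1})-\phi(e_{-1},e_1)+2\phi(e_0,e_0)=0$ for \emph{every} $2$-cochain $\phi$, by antisymmetry. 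The identical computation applies in $\mathcal{V}$: the Gelfand-Fuks cocycle \eqref{GelfandFuks} vanishes on the indices $-1,0,1$ (its only term $n^3-n$ does), so $[\hat{e}_{\pm 1},\hat{e}_0]$ and $[\hat{e}_{-1},\hat{e}_1]$ carry no central contribution and the relations there are literally those of $\mathcal{W}$. Hence neither $\Psi$ nor $\hat{\Psi}$ is a coboundary, and both define non-zero cohomology classes.

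The only genuinely laborious step is the verification that $Q\equiv 0$ on the hyperplane $i+j+k+l=0$; everything else reduces to a one- or two-line check. I would therefore carry out that step via the alternating-symmetry reduction above, so as to replace the explicit expansion of six degree-four products by the divisibility observation and a single numerical evaluation.
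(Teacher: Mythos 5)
Your proposal is correct and follows essentially the same route as the paper: direct verification of the cocycle identity on basis quadruples (where the paper simply asserts a ``straight-forward calculation'', you organize it via the factor $\delta_{i+j+k+l,0}$, the alternating symmetry and Vandermonde divisibility, and your check at $(2,1,0,-3)$ is numerically correct), followed by the same non-triviality witness, namely that $\Psi(e_{-1},e_1,e_0)=2$ while every coboundary $\delta_2\phi$ vanishes on the $\mathfrak{sl}_2$-triple by antisymmetry, with the same observation that the Gelfand--Fuks cocycle vanishes on the indices $-1,0,1$ so the argument transfers verbatim to $\mathcal{V}$. Your formulation $\hat{\Psi}=\pi^{*}\Psi$ with $\pi^{*}$ a cochain map is a slightly cleaner packaging of the paper's case distinction (central argument versus arguments from $\mathcal{W}$), but it is the same argument in substance.
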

\begin{proof}
By their very definition, $\Psi$ and $\hat{\Psi}$ are alternating.\\
A straight-forward calculation of (\ref{CocGen}) for a triplet of basis elements $e_i,e_j,e_k$ yields $\delta_3\Psi =0$. Hence $\Psi$ is a three-cocycle of $\mathcal{W}$.  
Concerning the Virasoro algebra, we have $\delta_3\hat{\Psi} (x_1,x_2,x_3,x_4)=0$ if one of the arguments is central. If all the arguments are coming from $\mathcal{W}$, we obtain\footnote{In abuse of notation, we use the same symbol $x$ to refer both to $x\in\mathcal{V}$ and its projection $\pi (x)\in\mathcal{W}$.}: $\delta_3\hat{\Psi} (x_1,x_2,x_3,x_4)=\delta_3\Psi (x_1,x_2,x_3,x_4)=0$.  Thus, $\hat{\Psi}$ is a 3-cocycle for $\mathcal{V}$. It remains to be shown that these cocycles are not trivial.\\
Let us assume that $\Psi$ and $\hat{\Psi}$ are coboundaries, which will lead us to a contradiction. So, let $\Phi:\mathcal{W}\times\mathcal{W}\rightarrow \mathbb{K}$ be a 2-cochain with $\Psi=\delta_2\Phi$. On the one hand, evaluating $\Psi$ on the triple $e_{-1},e_1,e_0$, we obtain by the very definition (\ref{GVCoc}) of $\Psi$:
\begin{equation}
 \Psi (e_{-1},e_1,e_0)=2\,.\label{gun}
\end{equation}
On the other hand, $\Psi$ being a coboundary we obtain using (\ref{CobGen}):
\begin{equation}\label{enb}
\begin{aligned}
\Psi (e_{-1},e_1,e_0)&=\Phi\left(\left[e_{-1},e_{1}\right],e_{0}\right)+\Phi\left(\left[e_{1},e_{0}\right],e_{-1}\right)+\Phi\left(\left[e_{0},e_{-1}\right],e_{1}\right)\\
&=2\Phi (e_0,e_0)-\Phi (e_1,e_{-1})-\Phi (e_{-1},e_1)=0\,.
\end{aligned}
\end{equation}
Therefore, $\Psi$ cannot be a coboundary. Similarly, assume $\hat{\Psi}=\delta_2\hat{\Phi}$, with $\hat{\Phi}:\mathcal{V}\times\mathcal{V}\rightarrow \mathbb{K}$. Again, we obtain $\hat{\Psi} (e_{-1},e_1,e_0)=\Psi (e_{-1},e_1,e_0)=2$ as well as $\delta_2\hat{\Phi}(e_{-1},e_1,e_0)=0$, hence $\hat{\Psi}$ cannot be a coboundary. Note that for pairs of elements of $e_{-1},e_1,e_0$, the defining cocycle (\ref{GelfandFuks}) for the central extension of $\mathcal{W}$ vanishes, hence exactly the same expression (\ref{enb}) will also appear for $\hat{\Psi}$.
\end{proof}
We call the cocycle $\Psi$ the \textit{algebraic Godbillon-Vey cocycle}. An immediate consequence of the  Proposition \ref{gv} is:
\newtheorem{H3VK}[GV]{Proposition}
\begin{H3VK} \label{H3VK}
The third cohomology group of the Witt and the Virasoro algebra with values in the trivial module is at least one-dimensional, i.e.:
\begin{equation*}
dim(\mathrm{H}^3(\mathcal{W},\mathbb{K}))\geq 1 \text{ and } dim(\mathrm{H}^3(\mathcal{V},\mathbb{K}))\geq 1\,.
\end{equation*}
\end{H3VK}
\theoremstyle{remark}
\newtheorem{R1}{Remark}[subsection]
\begin{R1}
The Godbillon-Vey cocycle is known in the context of the continuous cohomology of $\mathrm{H}^3(Vect (S^1),\mathbb{R})$. For the interested reader, we exhibit the relation in the following.\\
Let $t$ be the coordinate along $S^1$. The elements of $Vect(S^1)$ can be represented by functions on $S^1$. Assigning to the vector field $f(t)\frac{d}{dt}$ the function $f(t)$, it was shown in \cite{MR0245035} that the continuous cohomology $\mathrm{H}_c^*(Vect(S^1),\mathbb{R})$ of $Vect(S^1)$ with values in $\mathbb{R}$ is the free graded-commutative algebra
generated by an element $\omega$ of degree two and an element $\mathscr{GV}$ of degree three.
The generator of degree two is given by:
\begin{equation}\label{GenDeg2}
\omega: \left(f\frac{d}{dt},g\frac{d}{dt}\right)\mapsto \int_{S^1}\text{det}\left(
\begin{array}[h]{cc}
	f' & g'\\
	f''&g''
\end{array}\right)dt\,,
\end{equation} 
while the generator of degree three is given by:
\begin{equation}\label{GenDeg3}
\mathscr{GV}:\left(f\frac{d}{dt},g\frac{d}{dt},h\frac{d}{dt}\right)\mapsto \int_{S^1}\text{det}\left(
\begin{array}[h]{ccc}
	f & g & h\\
	f' & g'& h'\\
	f''&g''& h''
\end{array}\right)dt\,,
\end{equation} 
with $f,g,h\in C^\infty(S^1)$ and the prime denoting the derivative with respect to $t$. The generator $\mathscr{GV}$ in (\ref{GenDeg3}) is commonly called the \textit{Godbillon-Vey} cocycle. \\
If one considers the complexified vector field $\tilde{e}_n=ie^{int}\frac{d}{dt}$ then
\begin{equation}\label{egus}
\begin{aligned}
\mathscr{GV}(\tilde{e}_n,\tilde{e}_m,\tilde{e}_k)&=-\int_{S^1}\text{det }\left( \begin{array}[h]{ccc} 1 & 1 & 1\\ n & m& k\\ n^2 & m^2 & k^2\end{array}\right)\ e^{i(n+m+k)t}dt\\
&=(n-m)(n-k)(m-k)\int_{S^1}e^{i(n+m+k)t}dt
\end{aligned}
\end{equation}
The integral evaluates to zero if $n+m+k\neq 0$, otherwise it yields the value $1$. The expression (\ref{egus}) makes perfect sense for our algebraic generators $e_n$ of $\mathcal{W}$ for every field $\mathbb{K}$ with $\text{char }(\mathbb{K})=0$, and we obtain the expression (\ref{GVCoc}).
\end{R1}
Before continuing, we decompose our cohomology spaces into their degree subspaces,
\begin{align*}
&\mathrm{H}^3(\mathcal{W},\mathbb{K})=\bigoplus_{d\in\mathbb{Z}}\mathrm{H}^3_{(d)}(\mathcal{W},\mathbb{K})\,,\\
&\mathrm{H}^3(\mathcal{V},\mathbb{K})=\bigoplus_{d\in\mathbb{Z}}\mathrm{H}^3_{(d)}(\mathcal{V},\mathbb{K})\,.
\end{align*}
Since our module $\mathbb{K}$ is internally graded, we can use the result of Fuks (\ref{Fuks}) that the degree non-zero cohomology is cohomologically trivial. Consequently,
\begin{align*}
&\mathrm{H}^3(\mathcal{W},\mathbb{K})=\mathrm{H}^3_{(0)}(\mathcal{W},\mathbb{K})\,,\\
&\mathrm{H}^3(\mathcal{V},\mathbb{K})=\mathrm{H}^3_{(0)}(\mathcal{V},\mathbb{K})\,.
\end{align*}
Phrased differently, every cocycle is cohomologous to a degree zero cocycle.
Our algebraic Godbillon-Vey cocycle is clearly a non-trivial element in the $\mathrm{H}^3_{(0)}$-spaces.\\
Our proof of Theorem \ref{H3VKMain} shall proceed by showing that the Godbillon-Vey cocycle $\Psi$ is a generator of the spaces $\mathrm{H}^3(\mathcal{W},\mathbb{K})$ and $\mathrm{H}^3(\mathcal{V},\mathbb{K})$. Let $\psi$ be an arbitrary degree zero 3-cocycle for $\mathcal{W}$ or $\mathcal{V}$. Then we set:
\begin{equation}
\psi'=\psi-\frac{\psi (e_{-1},e_1,e_0)}{2}\ \Psi\,.\label{Comb}
\end{equation}
By (\ref{gun}), we have $\psi'(e_{-1},e_1,e_0)=0$. In the following, we will prove:
\theoremstyle{proposition}
\newtheorem{H3VK2}[GV]{Proposition}
\begin{H3VK2} \label{H3VK2}
Let $\psi$ be a 3-cocycle for $\mathcal{W}$ or  $\mathcal{V}$ with $\psi (e_{-1},e_1,e_0)=0$. \\
Then $\psi$ is a coboundary.
\end{H3VK2}
Using (\ref{Comb}) and  Proposition \ref{H3VK2}, we obtain that every cohomology class is a multiple of the algebraic Godbillon-Vey cocycle class. This implies Theorem \ref{H3VKMain}.\\
It remains to show Proposition \ref{H3VK2}. The proof of  Proposition \ref{H3VK2} will be obtained by direct, elementary algebraic manipulations. However, although we use elementary algebra to prove the last result, the proof per se is not elementary, but somewhat intricate.\\
Let $\psi$ be a 3-cochain and $\phi$ a 2-cochain for $\mathcal{W}$ or $\mathcal{V}$. These cochains will be given by their system of coefficients $\phi_{i,j},b_{i},\psi_{i,j,k},c_{i,j}\in\mathbb{K}$ defined as follows:
\begin{align}
&\psi(e_i,e_j,e_k):=\psi_{i,j,k}\qquad \text{and}\qquad \psi(e_i,e_j,t):=c_{i,j}\nonumber \,, \\
& \phi(e_i,e_j):=\phi_{i,j}\qquad\text{and}\qquad  \phi(e_i,t):=b_i \label{CoeffPhi}\,,
\end{align}
with the obvious identification coming from the alternating property of the cochains.\\
For the convenience of the calculation, we introduce the shortcut
\begin{equation*}
\alpha_i:=-\frac{1}{12}(i^3-i)\,,
\end{equation*}
for the cocycle giving the definition of the Virasoro algebra. Note that $\alpha_{-i}=-\alpha_i$, $\alpha_0=\alpha_1=\alpha_{-1}=0$ and $\alpha_2=-1/2$.\\
The cocycle condition $\delta_3\psi =0$ and the coboundary condition $\psi=\delta_2\phi$ can be reformulated in terms of the coefficients, using (\ref{CocGen}) and (\ref{CobGen}) respectively. The cochain $\psi$ is a 3-cocycle if and only if:
\begin{equation}\label{Cocycle1}
\begin{aligned}
&0=\delta_3\psi (e_i,e_j,e_k,e_l)\\
\Leftrightarrow &\ 0=(j-i)\psi_{i+j,k,l}-(k-i)\psi_{i+k,j,l}+(l-i)\psi_{i+l,j,k}+(k-j)\psi_{j+k,i,l}\\
&-(l-j)\psi_{l+j,i,k}+(l-k)\psi_{l+k,i,j}+\alpha_i\delta_{i,-j}c_{k,l}-\alpha_i\delta_{i,-k}c_{j,l}\\
&+\alpha_i\delta_{i,-l}c_{j,k}
+\alpha_j\delta_{j,-k}c_{i,l}-\alpha_j\delta_{j,-l}c_{i,k}+\alpha_k\delta_{k,-l}c_{i,j}\,,
\end{aligned}
\end{equation}
and 
\begin{equation}
0=\delta_3\psi (e_i,e_j,e_k,t)\Leftrightarrow 0=(j-i)c_{i+j,k}-(k-i)c_{i+k,j}+(k-j)c_{j+k,i}\label{Cocycle2}\,.
\end{equation}
The cochain $\psi$ is a coboundary if and only if:
\begin{equation}\label{Coboundary1}
\begin{aligned}
\psi_{i,j,k}=\delta_2\phi (e_i,e_j,e_k)&=(j-i)\phi_{i+j,k}-(k-i)\phi_{i+k,j}+(k-j)\phi_{j+k,i}\\
&-\alpha_i\delta_{i,-j}b_{k}+\alpha_i\delta_{i,-k}b_{j}-\alpha_j\delta_{j,-k}b_{i}\,,
\end{aligned}
\end{equation}
and
\begin{equation}
c_{i,j}=\delta_2\phi(e_i,e_j,t)=(j-i)b_{i+j}\label{Coboundary2} \,.
\end{equation}
As we only need to consider cochains of degree zero, and since our trivial module $\mathbb{K}$ has only degree zero elements, non-zero coefficients are only possible if the indices of the said coefficients add up to zero, hence:
\begin{equation*}
\begin{array}[h]{cll}
\psi_{i,j,k}=0&\qquad & i+j+k\neq 0\,, \\
c_{i,j}=0 &\qquad & i+j\neq 0\,, \\
\phi_{i,j}=0 &\qquad & i+j \neq 0\,, \\
b_i=0 &\qquad & i \neq 0\,.
\end{array}
\end{equation*}

Finally, we have gathered the necessary ingredients to proceed with the proof of Proposition  \ref{H3VK2}. 
The proof proceeds in several steps. In the first steps, we will fix one index of the coefficients, which we will refer to as \textit{level}, and derive results for this particular level. For the coefficients $\psi_{i,j,k}$ for example, we will first analyze $\psi_{i,-i,0}$, $\psi_{i,-i-1,1}$, $\psi_{i,-i+1,-1}$, $\psi_{i,-i-2,2}$,  and $\psi_{i,-i+2,-2}$, $\forall\ i\in\mathbb{Z}$, or coefficients with some permutation of these indices, and refer to these as coefficients of level zero, one, minus one, two and minus two respectively. In the last step, we use recurrence relations to consider general coefficients $\psi_{i,-i-k,k}$ $\forall\ i,k\in\mathbb{Z}$.


We will treat both the Witt algebra and the Virasoro algebra simultaneously. 
The proof consists of two lemmas. The first lemma mostly involves a cohomological change, although in the case of the Virasoro algebra, we will already use the cocycle condition (\ref{Cocycle2}) in order to put constraints on the coefficients $c_{i,j}$.
The second lemma involves the cocycle condition (\ref{Cocycle1}). 
\newtheorem{H3VirasoroLemma}{Lemma}[subsection]
\begin{H3VirasoroLemma} \label{lemma1}
Every 3-cocycle $\psi'\in\mathrm{H}^3(\mathcal{V},\mathbb{K})$  satisfying $\psi' (e_1,e_{-1},e_0)=0$ is cohomologous to a 3-cocycle $\psi\in\mathrm{H}^3(\mathcal{V},\mathbb{K})$ with coefficients $c_{i,j},\psi_{i,j,k}\in\mathbb{K}$ fulfilling:
\begin{equation}\label{CoeffCij}
c_{i,j}=\delta_{i,-j}\left( \frac{1}{6}\ i\ (i-1)(i+1)c_{2,-2}\right)\qquad
\text{and }\qquad \psi_{i,j,1}=0\qquad \forall\ i,j\in\mathbb{Z}\,. 
\end{equation}
Every 3-cocycle $\psi'\in\mathrm{H}^3(\mathcal{W},\mathbb{K})$ satisfying $\psi' (e_1,e_{-1},e_0)=0$ is cohomologous to a 3-cocycle $\psi\in\mathrm{H}^3(\mathcal{W},\mathbb{K})$ with coefficients $\psi_{i,j,k}\in\mathbb{K}$ fulfilling:
\begin{equation}
\psi_{i,j,1}=0\qquad \forall\ i,j\in\mathbb{Z}\,.\label{PsiIJ1}
\end{equation}
\end{H3VirasoroLemma}
\begin{proof}
Let $\psi$ be a 3-cocycle for $\mathcal{W}$ or $\mathcal{V}$ fulfilling $\psi'(e_1,e_{-1},e_0)=0$. We will perform a cohomological change $\psi=\psi ' -\delta_2\phi$ with a suitable coboundary $\delta_2\phi$ such that for the cocycle $\psi$ we have for $\mathcal{V}$ and $\mathcal{W}$ the relations (\ref{CoeffCij}) and (\ref{PsiIJ1}), respectively. Note that the coboundary is obtained by constructing a 2-cochain $\phi$ such that $\delta_2\phi$ has the desired properties. Recall that $\phi$ is given by the system of coefficients $b_0$ and $\phi_{i,-i}$. First, we consider $b_0=\phi (e_0,t)$. By setting:
\begin{equation*}
 \phi (e_0,t)=b_{0}:=\frac{c_{-1,1}'}{2}\,,
\end{equation*}
we obtain after the cohomological change:
\begin{align*}
\psi(e_{-1},e_1,t)=\psi '(e_{-1},e_1,t)-\delta_2\phi(e_{-1},e_1,t)=c_{-1,1}' -2b_0=0=c_{-1,1}\,.
\end{align*}
 Considering the cocycle condition (\ref{Cocycle2}) for $(e_i,e_{-i-1},e_{1},t)$, and taking $c_{-1,1}=0$ into account, we obtain:
\begin{equation*}
0=-(1-i)c_{i+1,-i-1}+(i+2)c_{-i,i}\,.
\end{equation*}
Hence,
\begin{equation*}
 c_{i+1,-(i+1)}=\frac{i+2}{i-1}c_{i,-i}\qquad\text{ for }i>1\,.
\end{equation*}
Starting with $i=2$ and $k>i$ we obtain:
\begin{equation*}
c_{k,-k}=\frac{1}{6} (k+1)(k)(k-1)c_{2,-2}\,.
\end{equation*}

Next, we define $\phi (e_i,e_j)$ in such a manner to obtain the announced conditions $\psi (e_i,e_{-i-1},e_1)=0$ for the cohomologous cocycle $\psi = \psi ' -\delta_2 \phi$. The coefficients $\phi_{i,-i}=\phi(e_i,e_{-i})$ can be chosen arbitrary as long as the alternating conditions $\phi_{i,-i}=-\phi_{-i,i}$ and $\phi_{0,0}=0$ are satisfied. Then $\delta_2\phi$ will be a coboundary. \\
We set $\phi_{1,-1}=\phi_{2,-2}=\phi_{0,0}=0$, and for $i\neq 1$,
\begin{equation}
\phi_{i+1,-(i+1)}:=-\frac{\psi_{i,-1-i,1}'}{(1-i)}-\frac{(2+i)}{(1-i)}\phi_{i,-i}\,,\label{RecPhi}
\end{equation}
with $\psi_{i,-1-i,1}'=\psi ' (e_i,e_{-i-1},e_1)$. Note that for $i=0$, we obtain:
\begin{equation*}
\phi_{1,-1}=-\psi_{0,-1,1}'-2\phi_{0,0}=-\psi_{0,-1,1}'=0\,,
\end{equation*}
in accordance with our general assumption. Starting from $i=2$ in (\ref{RecPhi}), taking into account $\phi_{2,-2}=0$, we obtain a definition for $\phi_{j,-j}$ with $j \geq 3$. For negative $j$, we use antisymmetry. We can rewrite (\ref{RecPhi}) as:
\begin{equation*}
\psi_{i,-1-i,1}'=(i-1)\phi_{i+1,-(1+i)}-(2+i)\phi_{i,-i}\,.
\end{equation*}
Using (\ref{Coboundary1}), we obtain:
\begin{equation*}
\delta_2\phi (e_i,e_{-i-1},e_1) =(-1-2i)\cancel{\phi_{-1,1}}+(i-1)\phi_{i+1,-1-i}+(2+i)\underbrace{\phi_{-i,i}}_{-\phi_{i,-i}}\,.
\end{equation*}
Note that the central terms vanish as either the Kronecker Delta or the $\alpha_k$ are zero. Hence:
\begin{align}
&\psi ' (e_i,e_{-i-1},e_1)-\delta_2\phi (e_i,e_{-i-1},e_1)\nonumber \\
&=\psi_{i,-1-i,1}'-(i-1)\phi_{i+1,-(1+i)}+(2+i)\phi_{i,-i}=0\,. \label{CohomChange}
\end{align}
Equation (\ref{CohomChange}) is also true for $i=0$.
\end{proof}
\theoremstyle{remark}
\newtheorem{Rem0}[R1]{Remark}
\begin{Rem0} \label{rem0}
The expression for the coefficients $c_{i,j}$ in (\ref{CoeffCij}) corresponds  to the expression of the Virasoro 2-cocycle, generator of the one-dimensional $\mathrm{H}^2(\mathcal{W},\mathbb{K})$ group; compare to (\ref{GelfandFuks}). 
The fact that the two expressions agree is not surprising, because the conditions  (\ref{Coboundary2}) and (\ref{Cocycle2}) involving the generator $t$ reduce to the coboundary and cocycle conditions of $\mathrm{H}^2(\mathcal{W},\mathbb{K})$. 
\end{Rem0}

In the second lemma, we will use the cocycle condition (\ref{Cocycle1}).
\theoremstyle{theorem}
\newtheorem{H3VirasoroLemma2}[H3VirasoroLemma]{Lemma}
\begin{H3VirasoroLemma2} \label{lemma2}
The following holds:
\begin{enumerate}
\item[(i)] Let $\psi\in\mathrm{H}^3(\mathcal{V},\mathbb{K})$ be a $3$-cocycle such that:
\begin{align*}
&c_{i,j}=\delta_{i,-j}\left( \frac{1}{6}(i-1)(i)(i+1)c_{2,-2}\right)\qquad 
\text{and }\qquad\ \psi_{i,j,1}=0\qquad \forall\ i,j\in\mathbb{Z}\,.\\
\text{Then}\qquad& c_{i,j}=0\qquad \forall\ i,j\in\mathbb{Z} \qquad
\text{ and }\qquad \psi_{i,j,k}=0\qquad \forall \ i,j,k\in\mathbb{Z}\,.
\end{align*}
\item[(ii)]  Let $\psi\in\mathrm{H}^3(\mathcal{W},\mathbb{K})$ be a $3$-cocycle such that: 
\begin{align*}
&\psi_{i,j,1}=0\qquad \forall\ i,j\in\mathbb{Z}\,.\\
\text{Then}\qquad &\psi_{i,j,k}=0\qquad \forall \ i,j,k\in\mathbb{Z}\,.
\end{align*}
\end{enumerate}
\end{H3VirasoroLemma2}
\begin{proof}
We write down the proof in the setting of the Virasoro algebra.
However, the central terms cancel in most of the cocycle conditions which we will consider in the following. Hence, the conclusions obtained are valid for both the Witt algebra and the Virasoro algebra.
The central terms only appear towards the end of the proof, where we will explicitly point out the differences between the proof for the Witt algebra and for the Virasoro algebra.\\
The coefficients to consider are of the form $\psi_{i,j,k}$ with $i+j+k=0$, and can be written as $\psi_{-i-k,i,k}$. We will first show that they are generated  by a single coefficient $\psi_{-2,2,0}$, and subsequently we will show  that this coefficient is zero.\\
We will proceed by constructing recurrence relations. In order to have recurrence relations, at least one of the generators $e_i$ appearing in the cocycle condition (\ref{Cocycle1}) is taken to be of degree plus or minus one, $e_i=e_{\pm 1}$. This means that no coefficient $c_{i,j}$ will appear in the recurrence relations, because either the coefficient $c_{i,j}$ is equal to a coefficient of the form $c_{i,\pm 1}$, which is zero due to our assumption, or its pre-factor of the form $(i^3-i)\delta_{i,j}$ is zero because $((\pm 1)^3\mp 1)=0$. Therefore, we can once again treat the Virasoro algebra and the Witt algebra simultaneously.\\
We will start with level zero, $k=0$. The cocycle condition (\ref{Cocycle1}) on the generators $(e_{-i-1},e_i,e_0,e_1)$ yields: 
\begin{align}
&(1+2 i) \cancel{\psi_{-1,0,1}}+\cancel{\psi_{1,-1-i,i}}-\cancel{(1+i) \psi_{-1-i,i,1}}\nonumber\\
&+(2+i) \psi_{-i,i,0}-i \cancel{\psi_{i,-1-i,1}}+(-1+i) \psi_{1+i,-1-i,0}=0\nonumber\\
&\Leftrightarrow \psi_{-1-i,1+i,0}=\frac{(2+i)}{(-1+i)} \psi_{-i,i,0}\,.\label{RecLevelZero}
\end{align}
The first term is zero by assumption. The other slashed terms cancel each other, and are zero anyway by assumption. Starting with $i=2$, increasing $i$, we see that level zero is generated solely by the coefficient $\psi_{-2,2,0}$.  Values of $i<2$ do not yield any new information.\\
Let us continue with level minus one, $k=-1$. A recurrence relation for level minus one is obtained by considering the cocycle condition (\ref{Cocycle1}) on the generators $(e_{-i},e_i,e_{-1},e_1)$:
\begin{align}
&2 i \cancel{\psi_{0,-1,1}}+2 \psi_{0,-i,i}-(-1+i) \cancel{\psi_{-1-i,i,1}}+(1+i) \psi_{1-i,i,-1}\nonumber\\
&-(1+i) \cancel{\psi_{-1+i,-i,1}}+(-1+i) \psi_{1+i,-i,-1}=0\nonumber\\
&\Leftrightarrow \psi_{1-i,i,-1}=\frac{1}{(1+i)}(-2 \psi_{0,-i,i}-(-1+i) \psi_{1+i,-i,-1})\,.\label{RecLevelMinusOne}
\end{align}
The slashed terms are zero by assumption. Starting with $i=-2$, decreasing $i$, we see that level minus one is generated by the same generating coefficient as level zero, namely $\psi_{-2,2,0}$. In other words, we have no new generator of level minus one appearing, because at the starting point of the recurrence, $i=-2$, the starting coefficient $\psi_{-1,2,-1}$ is zero due to the alternating property.
Values of $i$ with $i>-2$ do not lead to any new information. \\
We can proceed similarly with level minus two $k=-2$. The cocycle condition (\ref{Cocycle1}) for the generators $(e_{-i+1},e_i,e_{-2},e_1)$ gives us the following recurrence relation:
\begin{align}
 3 \psi_{-1,1-i,i}&-(-3+i) \cancel{\psi_{-1-i,i,1}}+i \psi_{2-i,i,-2}\nonumber\\
&-(2+i)  \cancel{\psi_{-2+i,1-i,1}}+(-1+i) \psi_{1+i,1-i,-2}=0\nonumber\\
\Leftrightarrow \psi_{2-i,i,-2}&=\frac{1}{i}(-3 \psi_{-1,1-i,i}-(-1+i) \psi_{1+i,1-i,-2})\,.\label{RecLevelMinusTwo}
\end{align}
In the equations above, we already dropped the coefficient $\psi_{i+j,k,l}$ which is trivially zero for this combination of generators $e_i$ due to the alternating property. 
The slashed terms are of level plus one and are thus zero due to our assumption.
Starting with $i=-3$, $i$ decreasing, we see that also level minus two is determined entirely by $\psi_{-2,2,0}$. In fact, no new generator of level minus two appears because at $i=-3$, the starting coefficient $\psi_{-2,4,-2}$ vanishes because of the alternating property.
Again, taking $i>-3$ does not lead to any new information.\\
The same can be done for level plus two $k=2$, by considering the cocycle condition (\ref{Cocycle1}) for the generators $(e_{-i-1},e_i,e_{2},e_{-1})$:
\begin{align}
-3 \cancel{\psi_{1,-1-i,i}}&+i \psi_{-2-i,i,2}-(3+i) \psi_{1-i,i,-1}\nonumber\\
&+(1+i) \psi_{-1+i,-1-i,2}-(-2+i) \psi_{2+i,-1-i,-1}=0\nonumber\\
\Leftrightarrow \psi_{-2-i,i,2}&=\frac{1}{i}((3+i) \psi_{1-i,i,-1}-(1+i) \psi_{-1+i,-1-i,2}+(-2+i) \psi_{2+i,-1-i,-1})\,.\label{RecLevelPlusTwo}
\end{align}
Again, we dropped the term $\psi_{i+j,k,l}$ because this coefficient is trivially zero for the choice of generators $e_i$ under consideration.
The slashed term is zero by assumption. 
Starting with $i=3$, $i$ increasing, it is clear that also level plus two is solely generated by $\psi_{-2,2,0}$. Once more, taking $i<3$ does not yield any new information.\\
Finally, we can produce recurrence relations for general level $k$. Starting with positive $k$, the cocycle condition (\ref{Cocycle1}) for the generators $(e_{-i-k-1},e_i,e_{k},e_{1})$ yields:
\begin{align}
&-(1+i+2 k) \cancel{\psi_{-1-i,i,1}}+(-1+i) \psi_{1+i,-1-i-k,k}+(1+2 i+k) \cancel{\psi_{-1-k,k,1}}\nonumber\\
&+(2+i+k) \psi_{-i-k,i,k}-(-1+k) \psi_{1+k,-1-i-k,i}+(-i+k) \cancel{\psi_{i+k,-1-i-k,1}}=0\nonumber\\
&\Leftrightarrow \psi_{-1-i-k,i,1+k}=\frac{1}{1-k}(-(-1+i) \psi_{1+i,-1-i-k,k}-(2+i+k) \psi_{-i-k,i,k})\,.\label{RecLevelPlusK}
\end{align}
The slashed terms are of level plus one and thus zero by assumption.
Starting with $k=2$ and $i=k+2$, increasing $k$ and $i$, we see that at level $k+1$, no new generators of level $k+1$ appear, since on the right-hand side of the recurrence relation, only coefficients of level $k$ appear.
Therefore, a general positive level $k$ is build from the generator $\psi_{-2,2,0}$. Considering $i<k+2$ does not yield new information due to the alternating property.\\
The same can be done for negative $k$ by considering the cocycle condition (\ref{Cocycle1}) for the generators $(e_{-i-k+1},e_i,e_{k},e_{-1})$:
\begin{subequations}\label{RecLevelMinusK}
\begin{align}
&-(-1+i+2 k) \psi_{1-i,i,-1}+(1+i) \psi_{-1+i,1-i-k,k}+(-1+2 i+k) \psi_{1-k,k,-1}\nonumber\\
&+(-2+i+k) \psi_{-i-k,i,k}-(1+k) \psi_{-1+k,1-i-k,i}+(-i+k) \psi_{i+k,1-i-k,-1}=0\nonumber\\
& \Leftrightarrow \psi_{1-i-k,i,-1+k}=\frac{1}{-1-k}((-1+i+2 k) \psi_{1-i,i,-1}-(1+i) \psi_{-1+i,1-i-k,k}\\
&-(-1+2 i+k) \psi_{1-k,k,-1}-(-2+i+k) \psi_{-i-k,i,k}-(-i+k) \psi_{i+k,1-i-k,-1})\,.
\end{align}
\end{subequations}
Starting with $k=-2$ and $i=-2+k$, decreasing $i$ and $k$, we see that also a general negative level $k$ is build solely from the generating coefficient  $\psi_{-2,2,0}$. Indeed, at level $k-1$, no new generator of level $k-1$ appears, since the right-hand side of the recurrence relation only contains coefficients of level $k$ and minus one. 
Again, taking $i>-2+k$ does not yield new information.\\

Let us summarize the results we obtained so far. We showed that the coefficients $\psi_{i,j,k}$ are solely determined by the generating coefficient  $\psi_{-2,2,0}$. In the analysis above, no central terms appeared and thus, the conclusion is valid for both the Witt and the Virasoro algebra. Moreover, in the case of the Virasoro algebra, we showed in the previous lemma that the coefficients $c_{i,j}$ are generated by a single coefficient, namely $c_{-2,2}$. \\
In the last step of the proof, we have to check whether there are non-trivial relations between the two generators $c_{-2,2}$ and $\psi_{-2,2,0}$, which could force them to zero. Once we proved that the generating coefficients are zero, it follows that all the other coefficients $\psi_{i,j,k}$ and $c_{i,j}$ are also zero, due to the recurrence relations.\\
In order to prove that the generating coefficients are zero, we need to find at least two non-trivial relations in the case of the Virasoro algebra, and one non-trivial relation in the case of the Witt algebra. We will consider the cocycle condition (\ref{Cocycle1}) for the generators $(e_{-4},e_{-3},e_2,e_5)$ and $(e_{-3},e_{-2},e_2,e_3)$. The cocycle condition for $(e_{-4},e_{-3},e_2,e_5)$ yields:
\begin{align}\label{Coc1}
&\psi_{-7,2,5}-6 \psi_{-2,-3,5}+5 \psi_{-1,-4,5}+9 \underbrace{\psi_{1,-3,2}}_{=0}+3 \psi_{7,-4,-3}=0\nonumber\\
&\Leftrightarrow -\psi_{-7,5,2}+6 \psi_{5,-3,-2}-5 \psi_{5,-4,-1}+3 \psi_{7,-4,-3}=0\,,
\end{align}
whereas the one for $(e_{-3},e_{-2},e_2,e_3)$ yields:
\begin{align}\label{Coc2}
&\frac{1}{2} c_{-3,3}+2 c_{-2,2}+\psi_{-5,2,3}-5 \psi_{-1,-2,3}+4 \psi_{0,-3,3}+6 \psi_{0,-2,2}-5 \underbrace{\psi_{1,-3,2}}_{=0}+\psi_{5,-3,-2}=0\nonumber\\
&\Leftrightarrow\frac{1}{2} c_{-3,3}+2 c_{-2,2}-\psi_{-5,3,2}+5 \psi_{3,-2,-1}+4 \psi_{0,-3,3}+6 \psi_{0,-2,2}+\psi_{5,-3,-2}=0\,.
\end{align}
The terms of level plus one are zero by assumption. We will use the recurrence relations (\ref{CoeffCij}), (\ref{RecLevelZero}), (\ref{RecLevelMinusOne}), (\ref{RecLevelMinusTwo}), (\ref{RecLevelPlusTwo}), (\ref{RecLevelPlusK}) and (\ref{RecLevelMinusK}) to express all the coefficients $\psi_{i,j,k}$ and $c_{i,j}$ appearing in the conditions above in terms of the generators $c_{-2,2}$ and $\psi_{-2,2,0}$. We will write down all the coefficients which are needed, be it implicitly or explicitly, in order to expose the structure of the recurrence relations and their entanglement. We will see that the cocycle condition (\ref{Coc1}) for $(e_{-4},e_{-3},e_2,e_5)$ yields an non-trivial relation for $\psi_{-2,2,0}$. As no central terms appear, the cocycle condition (\ref{Coc1}) is valid both for the Witt and the Virasoro algebra. In case of the Witt algebra, the cocycle condition (\ref{Coc1}) will be sufficient to conclude.
In case of the Virasoro algebra, the cocycle condition (\ref{Coc1}), together with the second cocycle condition (\ref{Coc2}) for $(e_{-3},e_{-2},e_2,e_3)$, will yield $c_{i,j}=0\ \forall\ i,j\in\mathbb{Z}$, which allows to conclude.\\
Let us begin with the coefficients of level zero. The recurrence relation (\ref{RecLevelZero}) yields for $i=2$ the following expression for $\psi_{-3,3,0}$:
\begin{equation}
\psi_{-3,3,0}=4\psi_{-2,2,0}\,. \label{PsiM330}
\end{equation}
Continuing with $i=3,4,5$ we obtain respectively:
\begin{equation}
\psi_{-4,4,0}= \frac{5}{2}\psi_{-3,3,0}
\stackrel{(\ref{PsiM330})}{\Leftrightarrow} \psi_{-4,4,0}=10\psi_{-2,2,0}\label{PsiM440}\,,
\end{equation}
and 
\begin{equation}
\psi_{-5,5,0}= 2\psi_{-4,4,0}
\stackrel{(\ref{PsiM440})}{\Leftrightarrow} \psi_{-5,5,0}=20\psi_{-2,2,0}\label{PsiM550}\,,
\end{equation}
and 
\begin{equation}
\psi_{-6,6,0}= \frac{7}{4}\psi_{-5,5,0}
\stackrel{(\ref{PsiM550})}{\Leftrightarrow} \psi_{-6,6,0}=35\psi_{-2,2,0}\label{PsiM660}\,.
\end{equation}
More coefficients of level zero will not be needed. Hence, let us consider the coefficients of level minus one. \\
The recurrence relation (\ref{RecLevelMinusOne}) yields for $i=-2,-3,-4,-5,-6$ the following coefficients, i.e. starting with $i=-2$,
\begin{equation}
\psi_{3,-2,-1}= -(-2\psi_{0,2,-2}+3\cancel{\psi_{-1,2,-1}})
\Leftrightarrow\psi_{3,-2,-1}= -2\psi_{-2,2,0}\label{Psi3M2M1}\,,
\end{equation}
and for $i=-3$,
\begin{equation}
\psi_{4,-3,-1}= -\frac{1}{2}(-2\psi_{0,3,-3}+4\psi_{-2,3,-1})
\stackrel{(\ref{Psi3M2M1}),(\ref{PsiM330})}{\Leftrightarrow}\psi_{4,-3,-1}=-8\psi_{-2,2,0}\label{Psi4M3M1}\,,
\end{equation}
for $i=-4$,
\begin{equation}
\psi_{5,-4,-1}= -\frac{1}{3}(-2\psi_{0,4,-4}+5\psi_{-3,4,-1})
\stackrel{(\ref{Psi4M3M1}),(\ref{PsiM440})}{\Leftrightarrow}\psi_{5,-4,-1}=-20\psi_{-2,2,0}\label{Psi5M4M1}\,,
\end{equation}
for $i=-5$,
\begin{equation}
\psi_{6,-5,-1}= -\frac{1}{4}(-2\psi_{0,5,-5}+6\psi_{-4,5,-1})
\stackrel{(\ref{Psi5M4M1}),(\ref{PsiM550})}{\Leftrightarrow}\psi_{6,-5,-1}=-40\psi_{-2,2,0}\label{Psi6M5M1}\,,
\end{equation}
and finally for $i=-6$,
\begin{equation}
\psi_{7,-6,-1}= -\frac{1}{5}(-2\psi_{0,6,-6}+7\psi_{-5,6,-1})
\stackrel{(\ref{Psi6M5M1}),(\ref{PsiM660})}{\Leftrightarrow}\psi_{7,-6,-1}=-70\psi_{-2,2,0}\label{Psi7M6M1}\,.
\end{equation}
More coefficients of level minus one will not be needed. We continue with the coefficients of level plus two.\\
The recurrence relation (\ref{RecLevelPlusTwo}) yields for $i=3,4,5$ the following coefficients, starting with $i=3$:
\begin{equation}
\psi_{-5,3,2}= \frac{1}{3}(6\psi_{-2,3,-1}-4\cancel{\psi_{2,-4,2}}+\psi_{5,-4,-1})
\stackrel{(\ref{Psi3M2M1}),(\ref{Psi5M4M1})}{\Leftrightarrow}\psi_{-5,3,2}=-\frac{8}{3}\psi_{-2,2,0}\,,\label{PsiM532}
\end{equation}
for $i=4$,
\begin{equation}
\psi_{-6,4,2}= \frac{1}{4}(7\psi_{-3,4,-1}-5\psi_{3,-5,2}+2\psi_{6,-5,-1})
\stackrel{(\ref{Psi4M3M1}),(\ref{Psi6M5M1}),(\ref{PsiM532})}{\Leftrightarrow}\psi_{-6,4,2}=-\frac{28}{3}\psi_{-2,2,0}\,,\label{PsiM642}
\end{equation}
and finally for $i=5$,
\begin{equation}
\psi_{-7,5,2}= \frac{1}{5}(8\psi_{-4,5,-1}-6\psi_{4,-6,2}+3\psi_{7,-6,-1})
\stackrel{(\ref{Psi5M4M1}),(\ref{Psi7M6M1}),(\ref{PsiM642})}{\Leftrightarrow}\psi_{-7,5,2}=-\frac{106}{5}\psi_{-2,2,0}\,.\label{PsiM752}
\end{equation}
More coefficients of level plus two are not needed. We will continue with the coefficients of level minus two.\\
The recurrence relation (\ref{RecLevelMinusTwo}) yields for $i=-3,-4,-5$ the following coefficients, starting with $i=-3$:
\begin{equation}
\psi_{5,-3,-2}= -\frac{1}{3}(-3\psi_{-1,4,-3}+4\cancel{\psi_{-2,4,-2}})
\stackrel{(\ref{Psi4M3M1})}{\Leftrightarrow}\psi_{5,-3,-2}=-8\psi_{-2,2,0}\,,\label{Psi5M3M2}
\end{equation}
for $i=-4$ we obtain
\begin{equation}
\psi_{6,-4,-2}= -\frac{1}{4}(-3\psi_{-1,5,-4}+5\psi_{-3,5,-2})
\stackrel{(\ref{Psi5M4M1}),(\ref{Psi5M3M2})}{\Leftrightarrow}\psi_{6,-4,-2}=-25\psi_{-2,2,0}\,,\label{Psi6M4M2}
\end{equation}
and for $i=-5$
 \begin{equation}
\psi_{7,-5,-2}= -\frac{1}{5}(-3\psi_{-1,6,-5}+6\psi_{-4,6,-2})
\stackrel{(\ref{Psi6M5M1}),(\ref{Psi6M4M2})}{\Leftrightarrow}\psi_{7,-5,-2}=-54\psi_{-2,2,0}\,.\label{Psi7M5M2}
\end{equation}
These are all the coefficients needed for level minus two. Next, we need some coefficients for level minus three.\\
Putting $k=-2$ in the recurrence relation (\ref{RecLevelMinusK}), we obtain a recurrence relation for level minus three:
\begin{subequations}\label{RecLevelMinus3}
\begin{align}
\psi_{3-i,i,-3}=&(-5+i)\psi_{1-i,i,-1}-(1+i)\psi_{-1+i,3-i,-2}-(-3+2i)\psi_{3,-2,-1}\\
&-(-4+i)\psi_{2-i,i,-2}-(-i-2)\psi_{i-2,3-i,-1}\,.
\end{align}
\end{subequations}
We need only one coefficient of level minus three, namely the coefficient obtained by taking $i=-4$ in (\ref{RecLevelMinus3}):
\begin{align}
&\psi_{7,-4,-3}=-9\psi_{5,-4,-1}+3\psi_{-5,7,-2}+11\psi_{3,-2,-1}+8\psi_{6,-4,-2}-2\psi_{-6,7,-1}\nonumber\\
&\stackrel{(\ref{Psi5M4M1}),(\ref{Psi7M5M2}),(\ref{Psi3M2M1}),(\ref{Psi6M4M2}),(\ref{Psi7M6M1})}{\Leftrightarrow}\psi_{7,-4,-3}=-20\psi_{-2,2,0}\,.\label{Psi7M4M3}
\end{align}
These recurrence relations are valid both for the Witt algebra and the Virasoro algebra, since no central terms appear.\\
At last, we will need the coefficient $c_{-3,3}$ in the case of the Virasoro algebra. Taking the relation (\ref{CoeffCij}) and putting $i=-3$, $j=3$, we obtain:
\begin{equation}
c_{-3,3}=4\ c_{-2,2}\,.\label{CM33}
\end{equation}
Finally, we obtained all the coefficients needed. Inserting the coefficients (\ref{PsiM752}), (\ref{Psi5M3M2}), (\ref{Psi5M4M1}) and  (\ref{Psi7M4M3}) into the cocycle condition (\ref{Coc1}), we obtain:
\begin{align}
&\frac{106}{5}\psi_{-2,2,0}-48\psi_{-2,2,0}
+100\psi_{-2,2,0}-60\psi_{-2,2,0}=0\nonumber\\
&\Leftrightarrow \psi_{-2,2,0}=0\,.\label{Final1}
\end{align}
This already allows to conclude for the Witt algebra.
Similarly, inserting the coefficients (\ref{PsiM532}), (\ref{Psi3M2M1}), (\ref{PsiM330}), (\ref{Psi5M3M2}) and (\ref{CM33}) into the cocycle condition (\ref{Coc2}), we obtain:
\begin{align}
&\frac{8}{3}\psi_{-2,2,0}-10\psi_{-2,2,0}
+16\psi_{-2,2,0}+6\psi_{-2,2,0}-8\psi_{-2,2,0}+4c_{-2,2}=0\nonumber\\
&\Leftrightarrow 3 c_{-2,2}+5\psi_{-2,2,0}=0\,.\label{Final2}
\end{align}
Equations (\ref{Final1}) and (\ref{Final2}) together yield $c_{-2,2}=0$ and $\psi_{-2,2,0}=0$, yielding the conclusion also for the Virasoro algebra.
\end{proof}

\textbf{Proof of Proposition \ref{H3VK2}:}
\begin{proof}
Lemma \ref{lemma1} tells us that we can always perform a cohomological change of a 3-cocycle in $\mathrm{H}^3(\mathcal{W},\mathbb{K})$ or $\mathrm{H}^3(\mathcal{V},\mathbb{K})$  satisfying $\psi (e_{-1},e_1,e_0)=0$ such that we obtain a cohomological equivalent 3-cocycle with coefficients fulfilling the assumptions of Lemma \ref{lemma2}. Lemma \ref{lemma2} tells us that all the coefficients of the 3-cocycle satisfying $\psi (e_{-1},e_1,e_0)=0$ are zero. This allows to conclude. 
\end{proof}
\theoremstyle{remark}
\newtheorem{R2}[R1]{Remark}
\begin{R2}
In the proof of Proposition \ref{H3VK2}, $\mathcal{W}$ and  $\mathcal{V}$ were treated in a similar manner, finally yielding $\mathrm{H}^3(\mathcal{W},\mathbb{K})\cong \mathrm{H}^3(\mathcal{V},\mathbb{K})$. The reader might have gotten the impression that this could also be true for other cohomology spaces. However, already 
\begin{equation}
1=dim(\mathrm{H}^2(\mathcal{W},\mathbb{K}))\neq dim(\mathrm{H}^2(\mathcal{V},\mathbb{K}))=0\,,
\end{equation}
shows that the underlying structure is more delicate. The Hochschild-Serre spectral sequence with the trivial module $M=\mathbb{K}$ in (\ref{HochSerre}) gives more information.
\end{R2}



\providecommand{\bysame}{\leavevmode\hbox to3em{\hrulefill}\thinspace}
\providecommand{\MR}{\relax\ifhmode\unskip\space\fi MR }
\providecommand{\MRhref}[2]{%
  \href{http://www.ams.org/mathscinet-getitem?mr=#1}{#2}
}
\providecommand{\href}[2]{#2}

\end{document}